\setlist[enumerate,1]{font=\upshape, itemsep=.5ex}\setlist[itemize,1]{font=\upshape, itemsep=.5ex}
\def\Z{{\mathbb Z}}
\def\F{{\mathbb F}}
\def\Q{{\mathbb Q}}
\def\R{{\mathbb R}}
\def\C{{\mathbb C}}
\def\A{{\mathbb A}}
\def\calq{\mathcal{Q}}
\def\calp{\mathcal{P}}
\def\calc{\mathcal{C}}
\def\calo{\mathcal{O}}
\newcommand{\compactlist}{\begin{list}{\enumerate}{\setlength{\leftmargin}{1em}}}
\def\cs{\mathbin{\#}} 
\def\co{\colon\thinspace}
\newtheorem{theorem}{Theorem} 
\newtheorem{lemma}[theorem]{Lemma}
\theoremstyle{definition}
\def\MR#1{}
\begin{document}
\title{Branched covers and rational homology balls}

\author{Charles Livingston}
\thanks{This work was supported by a grant from the National Science Foundation, NSF-DMS-1505586.   }
\address{Charles Livingston: Department of Mathematics, Indiana University, Bloomington, IN 47405}\email{livingst@indiana.edu}



\begin{abstract} 
The concordance group of knots in $S^3$ contains a subgroup isomorphic to $(\Z_2)^\infty$, each element of which is represented by a  knot $K$ with the property that for every $n>0$, the   $n$--fold cyclic   cover of $S^3$ branched over $K$ bounds a rational homology ball.   This implies that the kernel of the  canonical homomorphism from the knot concordance group to the infinite direct sum of rational homology cobordism groups (defined via prime-power branched covers) contains an infinitely generated two-torsion subgroup.
\end{abstract}

\maketitle


\section{Introduction} \label{sec:intro} 

There is a homomorphism \[
\varphi     \colon   \calc  \to \prod _{q \in {\calq}}\Theta ^3_{\Q},
\]
where $\calc$ is the smooth concordance group of knots in $S^3$, $\calq$ is the set of prime power integers, and $\Theta ^3_{\Q}$ is the rational homology cobordism group.  For a knot $K$ and $q \in \calq$, the $q$--component  of  $\varphi(K)$ is the class of $M_q(K)$, the $q$--fold  cyclic cover of  $S^3$ branched over $K$.   

In the paper~\cite{MR4357613},  Aceto,  Meier, A.~Miller, M.~Miller,   Park, and    Stipsicz proved that $\ker \varphi$  contains a subgroup isomorphic to    $ (\Z_2)^5$.   Here we will prove that $\ker \varphi$ contains a  subgroup isomorphic to     $ (\Z_2)^\infty$.   Our examples are of the form $K \cs -K^r$, where $-K$ denotes  the concordance inverse of $K$ (the mirror image of $K$ with string orientation reversed),  and $K^r$  is formed from $K$ by reversing its string orientation.  Such knots are easily seen to be in the kernel of $\varphi$;  the more difficult work is to  find nontrivial examples of order two.
 
The first  known example of a nontrivial element in $\ker \varphi$ was represented by the  knot $ K_1 =  8_{17} \cs - 8_{17}^r$, which is   of order two in $\calc$.  That $K_1 $  is of order at most two is elementary;    that $K_1$ is nontrivial in $\calc$ is one of the main results of~\cite{MR1670424}, proved using twisted Alexander polynomials.

The  results of~\cite{2019arXiv190412014K}   provide an infinitely generated free subgroup of  $\ker \varphi$.    Conjecturely,  $\calc \cong \Z^\infty \oplus (\Z_2)^\infty$; if true,  then  $\ker \varphi \cong \Z^\infty \oplus (\Z_2)^\infty$.

\subsection{Main result.}  Figure~\ref{fig:companion} illustrates a knot $P_n$ in a solid torus, where $J_n$ represents the  braid  illustrated on the right in the case of $n=5$; $n$ will always be odd.    We let $K_n$ denote the satellite of $8_{17}$ built from $P_n$.  In standard notation, $K_n = P_n(8_{17})$.  For future reference, we illustrate the braid $J_n^*$ formed by rotating $J_n$ around the vertical axis.  

\begin{figure}[h]
\labellist
\pinlabel {\text{{\Large$P_n$}}} at -10 165
\pinlabel {\text{{\Large$J_n$}}} at 170 106
\pinlabel {\text{{\Large$J_n$}}} at  460  95
\pinlabel {\text{{\Large$J_n^*$}}} at  740  95
\endlabellist
\includegraphics[scale=.45]{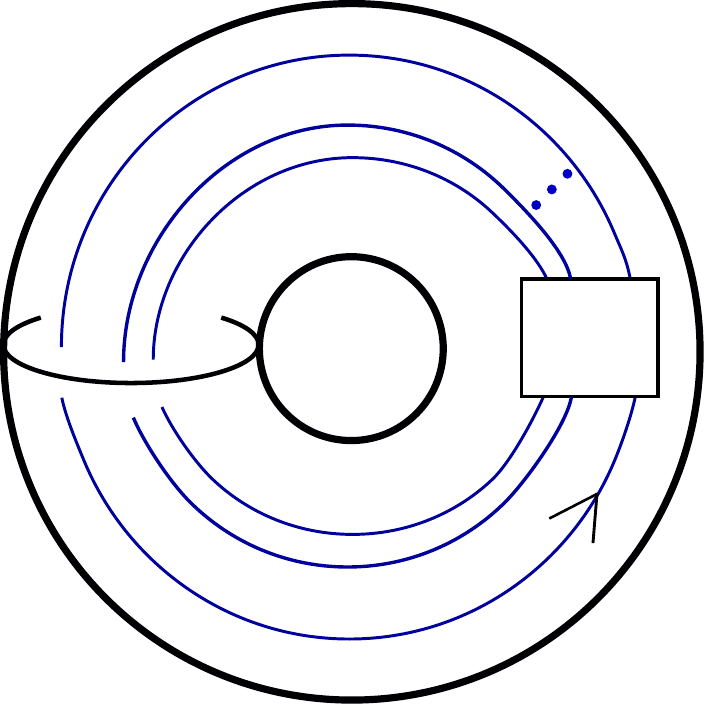}\hskip.5in  
\raisebox{.105in}{ \includegraphics[scale=.55]{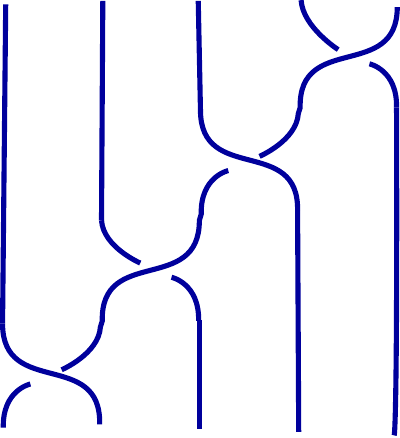}}\hskip.8in 
\raisebox{.15in}{ \includegraphics[scale=.55]{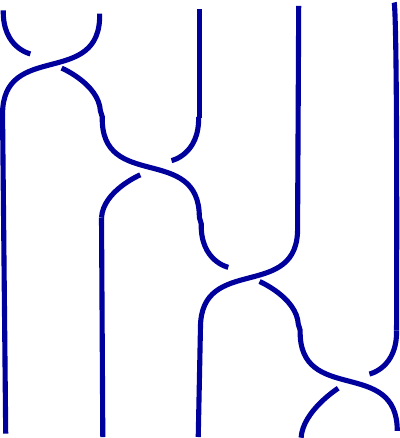}}
\caption{The knot $P_n \subset S^1 \times B^2$ , $J_n$, and $J_n^*$.}
\label{fig:companion}
\end{figure}

\begin{figure}[h]
\labellist
\pinlabel {\text{\huge $K$}} at 115 80
\pinlabel {\text{\huge$K$}} at 370  80
\endlabellist
\includegraphics[scale=.60]{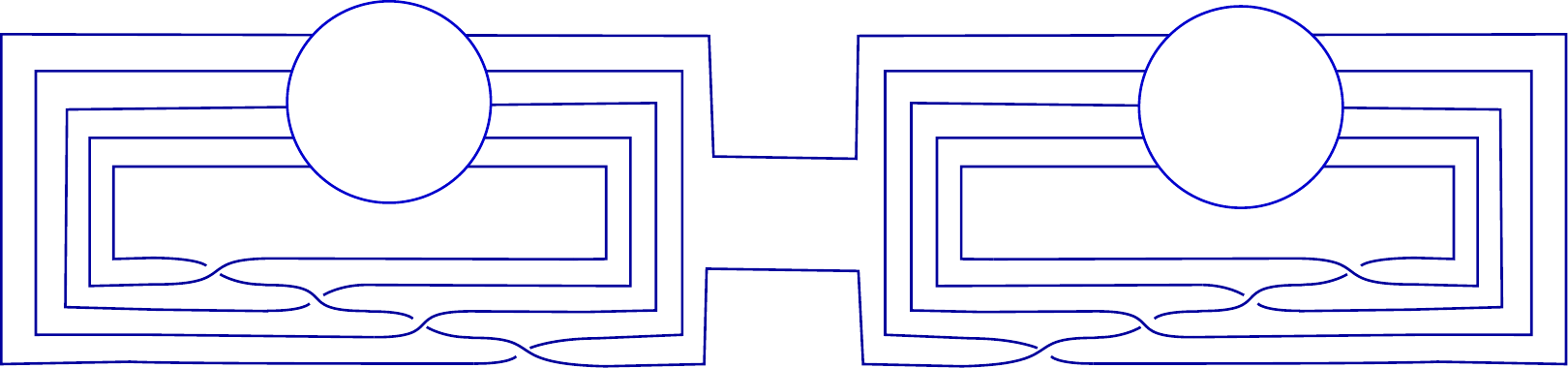}
\caption { $P_5(K) \cs    P_5(K)$.}
\label{fig:connectedsum}
\end{figure}

\begin{theorem}\label{thm:main} Let $K_n = P_n(8_{17})$.  For all odd $n$, the knot $L_n = K_n \cs -K_n^r$    satisfies $2L_n = 0 \in \calc$ and $L_n \in \ker \varphi$. There is an infinite set of prime integers $\calp$ for which   $L_\alpha \ne L_\beta \in \calc$  for all $\alpha \ne \beta$ in $\calp$.  In particular, the set of knots $\{L_n\}_{n \in \calp}$ generates a subgroup of $\ker \varphi$ that is  isomorphic to $(\Z_2)^\infty $.
\end{theorem}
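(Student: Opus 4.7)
The plan has three parts: showing $L_n \in \ker\varphi$, showing $2L_n = 0$, and proving independence of the $L_\alpha$.

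For $L_n \in \ker\varphi$, the $q$-fold branched cover commutes with connect sum, and reversing the string orientation of a knot leaves the cover unchanged as an oriented manifold while mirroring reverses its orientation, so
\[
M_q(L_n) \cong M_q(K_n) \cs \bigl(-M_q(K_n)\bigr).
\]
For any rational homology 3-sphere $Y$, the 4-manifold $(Y \setminus B^3) \times I$ is a rational homology 4-ball with boundary $Y \cs -Y$; taking $Y = M_q(K_n)$ gives $L_n \in \ker\varphi$. For the order-two relation $2L_n = 0$---stated in the introduction to be elementary---I would exhibit a ribbon disk for $L_n \cs L_n$, using the symmetric structure of its four summands together with a slice disk for $2 \cdot 8_{17}$ (which exists because $8_{17}$ is negatively amphichiral and hence of concordance order dividing two) propagated via the satellite operation $P_n$.

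The substantive step is independence. Given a finite nontrivial subset $S \subset \calp$, I want to show $\sum_{\alpha \in S} L_\alpha$ is nonzero in $\calc$. Because each $L_\alpha \in \ker\varphi$, obstructions derived only from the rational cobordism classes of the prime-power branched covers vanish, so a finer invariant is required. I expect the argument to proceed via Casson--Gordon invariants (twisted signatures of cyclic branched covers) or twisted Alexander polynomials with a Fox--Milnor-type factorization requirement, as in Kirk--Livingston's treatment of $K_1 = 8_{17} \cs -8_{17}^r$. The design of $P_\alpha$ with its $\alpha$-strand braid $J_\alpha$ encodes $\alpha$-specific structure in the $\alpha$-fold branched cover $M_\alpha(K_\alpha)$, whereas $M_\alpha(K_\beta)$ for $\beta \neq \alpha$ has a much simpler form. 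For each $\alpha \in S$, I would choose a character $\chi_\alpha$ on $H_1$ of the $\alpha$-fold branched cover of the connect sum whose twisted invariant detects the contribution of the $L_\alpha$ summand while being controlled on the others.

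The main obstacle is to rule out cancellation. If $\sum_{\alpha \in S} L_\alpha$ is slice, then the linking form on the relevant branched cover admits a metabolizer arising from the slice-disk complement, and the Casson--Gordon invariants must vanish on every character in this metabolizer. I would need to show that for any such metabolizer and every $\alpha \in S$, at least one character $\chi_\alpha$ yields a nonvanishing invariant. Ruling out all compatible metabolizers requires tracking how metabolizers of linking forms on connect sums decompose across summands and exploiting the arithmetic separation between the distinct primes in $\calp$ to prevent contributions from different summands from canceling. This is the technical heart of the argument, and I expect the specific choice of $\calp$ will be tailored precisely to make this orthogonality go through.
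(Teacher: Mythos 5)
Your treatments of the first two claims are essentially the paper's. For $L_n\in\ker\varphi$ you use exactly the argument in the paper: $M_q(L_n)\cong M_q(K_n)\cs\bigl(-M_q(K_n)\bigr)$ bounds the rational homology ball $\bigl(M_q(K_n)\setminus B^3\bigr)\times I$ (the paper phrases this as the branched cover of $B^4$ over the product slice disk for $K_n\cs -K_n$). For $2L_n=0$ the paper makes your sketch concrete: $n-1$ band moves turn $P_n(K)\cs P_n(K)$ into the $(0,n)$--cable of $K\cs K$, whose components are capped off by parallel copies of a slice disk for $K\cs K$; your appeal to the negative amphichirality of $8_{17}$ is the right input. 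These parts are fine in outline.

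The genuine gap is in the independence argument, which is the heart of the theorem, and where your proposed mechanism diverges from anything that is known to work. You suggest distinguishing $L_\alpha$ from the other summands by passing to the \emph{$\alpha$-fold} branched cover, on the grounds that $M_\alpha(K_\alpha)$ carries ``$\alpha$-specific structure'' while $M_\alpha(K_\beta)$ is ``simpler.'' No reason is given why this should be true, and computing Casson--Gordon data of $p$-fold covers for an infinite family of large primes $p$ is not feasible here. The paper instead uses the \emph{$3$-fold} cover for every summand: $H_1(M_3(8_{17}))\cong\Z_{13}\oplus\Z_{13}$ splits into eigenspaces giving two characters $\rho_3,\rho_9$, whose twisted Alexander polynomials are irreducible, not Galois conjugate, and are \emph{swapped} by string reversal (this is the property of $8_{17}$ that makes $K\cs -K^r$ nontrivial at all, and your proposal never identifies it). The winding number $\alpha$ of the pattern $P_\alpha$ enters only through the satellite formula $\Delta_{P_\alpha(8_{17}),\epsilon',\rho_3'}(t)=\Delta_{8_{17},\epsilon,\rho_3}(t^\alpha)\,\Delta_{\widetilde{J_\alpha}}(t)$, so the sliceness obstruction reduces to showing that a product of factors $\sigma_\bullet\bigl(\Delta_{8_{17},\epsilon,\rho_i}(t^\alpha)\bigr)$ and $\sigma_\bullet\bigl(\Delta_{8_{17},\epsilon,\rho_j}(t^\beta)\bigr)$ cannot be a norm $t^k(1-t)^jf(t)\overline{f(t^{-1})}$. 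The missing idea that makes this work --- and that explains why $\calp$ is an infinite set of \emph{primes} --- is a number-theoretic lemma: if $f$ is irreducible over $\Q(\omega_{13})$ and has no root that is a root of unity, then $f(t^p)$ is irreducible for all but finitely many primes $p$. Without this (or some substitute), your ``arithmetic separation between the distinct primes'' remains a hope rather than an argument, and the metabolizer/cancellation analysis you correctly flag as the obstacle is not carried out.
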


The rest of the paper presents a proof of this theorem.  The first two claims are easily dealt with    in Sections~\ref{sec:ordertwo} and~\ref{sec:inthekernel}.   The more difficult step of the proof calls on an analysis of twisted Alexander polynomials and their relevance to knot slicing; a review of twisted polynomials is included  in Section~\ref{sec:twisted-polys}. 
The proof of Theorem~\ref{thm:main} is completed in Section~\ref{sec:mainproof}, with the exception of a number theoretic result that is described Appendix~\ref{app:numtheory}. 

\smallskip
\noindent{\it Acknowledgements.}  Thanks to Darrell Haile for assisting me in  proof of the  number theoretic result in  Appendix~\ref{app:numtheory}.  Allison Miller provided valuable feedback about an early draft of this paper.


\section {Proof:  $2L_n = 0 \in \calc$} \label{sec:ordertwo}

Let $P_n^* \subset S^1 \times B^2$ denote the knot formed  using the braid $J_n^*$ in 
Figure~\ref{fig:connectedsum}.  For any knot $K$, let $P_n^*(K)$ denote the satellite of $K$ built using $P_n^*$.   It should be clear that $P_n$ and $P_n^*$ are orientation preserving isotopic, and thus for all knots $K$, $P_n(K) = P_n^*(K)$.  

Figure~\ref{fig:connectedsum} illustrates for an arbitrary knot $K$, the connected sum $P_n(K) \cs P_n^*(K) = P_n(K) \cs P_n(K)$ in the case of $n=5$.   Performing $n-1$ band moves in the evident way yields the $(0,n)$--cable of $K\cs K$.  Thus, if $K\cs K =0 \in \calc$, then the $n$ components of this link can be capped off with parallel copies of the slice disk for $K \cs K$, implying that  $ P_n(K) \cs P_n(K) =   0 \in \calc$.  In particular, $2 K_n = 0 \in \calc$ and $2K_n^r = 0 \in \calc$.


\section{Proof $L_n \in \ker \varphi$}\label{sec:inthekernel}

We prove a stronger statement: {\it   For all odd $n$, and for all positive integers $q$, $M_q(L_n)$ is a rational homology sphere that represents $0 \in \Theta_\Q^3$. }

The $q$--fold cyclic cover of $S^3$ branched over $K_n \cs -K_n^r$ is the same space as the $q$--fold cyclic cover of   $S^3$ branched over $K_n \cs -K_n$.  A slice disk for  $K_n \cs -K_n$ is built from $(S^3 \times I, K_n \times I)$ by removing a copy of $B^3 \times I$.  Taking the $q$--fold branched cover shows that the $q$--fold cyclic cover of $B^4$ branched over that slice disk is diffeomorphic to $M_q(K_n)^* \times I$, where $M_q(K_n)^*$ denotes a punctured copy of $M_q(K_n)$.  It remains to show that   $M_q(K_n)$ is a rational homology 3--sphere.

A formula of Fox~\cite{MR95876}  and Goeritz~\cite{MR1507011} states that the order of the first homology of $M_q(K_n)$ is given by the product of values $\Delta_{K_n}(\omega_q^i)$, where $\Delta_{K_n}(t)$ denotes the Alexander polynomial, $\omega_q$ is a primitive $q$--root of unity, and $i$ runs from 1 to $q-1$.

A result of Seifert~\cite{MR0035436}  shows that $\Delta_{K_n}(t) = \Delta_{8_{17}}(t^n) \Delta_{P_n(U)}$, where $U$ is the unknot.  We have that $P_n(U) = U$.  
The Alexander polynomial for $8_{17}$ is
\[ \Delta_{8_{17}}(t) = 1-4t+8t^2-11t^3+8t^4-4t^5+t^6.
\]
A numeric computation confirms that this polynomial does not have roots on the unit complex circle, and hence   $\Delta_{8_{17}}(t^n) $ has no roots on the unit complex circle.  From this is follows that      $\Delta_{K_n}(\omega_q^i) \ne 0$ for all $i$; thus the order of the homology of $M_q(K_n)$ is finite.   


\section{Review of twisted polynomials and $8_{17}$}\label{sec:twisted-polys}
In this section we  review twisted Alexander polynomials and their application in~\cite{MR1670420, MR1670424}  showing that $8_{17} \cs - 8_{17}^r \ne 0 \in \calc$.

Let $(X,B) \to (S^3, K)$ be the $q$--fold cyclic branched cover of a knot $K$  with $q$ a prime power.  In particular,  $X$ is  rational homology sphere.  There is a canonical surjection $\epsilon \co H_1(X - B) \to \Z$.  Suppose that $\rho \co H_1(X) \to \Z_p$ is homomorphism for some prime $p$.  Then there is an associated twisted polynomial $\Delta_{K, \epsilon, \rho}(t) \in \Q(\omega_p)[t]$.  It is well-defined, up to factors of the form $at^k$, where $a \ne 0 \in \Q(\omega_p)$.  These polynomials are  discriminants of Casson-Gordon invariants, first defined in~\cite{MR900252}. 

In the case of $K = 8_{17}$ and $q = 3$, we have $H_1(X) \cong \Z_{13} \oplus \Z_{13}$, and as a $\Z_{13}$--vector space this splits as a direct sum of a   3--eigenspace and a 9--eigenspace under the order three action of the deck transformation.   Both eigenspaces are 1--dimensional.  We denote this splitting as $E_3 \oplus E_9$. There are corresponding characters $\rho_3$ and $\rho_9$ of $H_1(X)$ onto $\Z_{13}$; these are defined as the quotient  maps onto $H_1(X) / E_3$ and onto $H_1(X)/E_9$.  We let $\rho_0$ denote the trivial $\Z_{13}$--valued character.

The values of $\Delta_{8_{17}, \epsilon, \rho_i}(t)$ are given in~\cite{ MR1670424}, duplicated here in Appendix~\ref{app:polys}. For $i = 0$ it is polynomial in $\Q[t]$.  For $i=3$ and $i=9$ it is in $\Q(\omega_{13})[t]$ and is not in  $\Q[t]$.  An essential observation is that for $8_{17}^r$, the roles of $\rho_3$ and $\rho_9$ are reversed.   All three of the polynomials  are irreducible in their respective polynomial rings, once any factors of $(1-t)$  and $t$ are removed.

In~\cite{MR1670424} the proof that $8_{17} \cs - 8_{17}^r$ is not slice comes down to the  observation that no product of the form
\[ \sigma_\delta(\Delta_{8_{17}, \epsilon, \rho_3}(t))  \sigma_\gamma(\Delta_{8_{17}, \epsilon, \rho_i}(t))
\hskip.3in \text{or} \hskip.3in 
\sigma_\delta(\Delta_{8_{17}, \epsilon, \rho_9}(t))  \sigma_\gamma(\Delta_{8_{17}, \epsilon, \rho_j}(t))
\]
is of the form  $a f(t) \overline{f(t^{-1})}(1-t)^j $ for some $f(t) \in \Q(\omega_{13})[t]$.  (That is, these products are not {\it norms} in the polynomial ring $\Q(\omega_{13})[t, t^{-1}]$, modulo powers of $(1-t)$ and $t$.)  Here $i = 0$ or $i=9$ and $j=0$ or $j=3$.  The number $a $ is in $\Q(\omega)$  and  the $\sigma_\nu$ are Galois automorphisms of $\Q(\omega_p)$ (which acts by sending $\omega_p $ to $\omega_p^\nu$).

Showing that the product of the polynomials does not factor in this way  is elementary once it is established that $\Delta_{8_{17}, \epsilon, \rho_3}(t) $ and $ \Delta_{8_{17}, \epsilon, \rho_9}(t) $ are irreducible and not Galois conjugate.


\section{Main Proof} \label{sec:mainproof}

Using the fact that $-P_n(8_{17})^r = P_n(8_{17})^r$,  the   knot $L_\alpha \cs L_\beta$  can be expanded as 
\[ P_\alpha(8_{17}) \cs   P_\alpha(8_{17} )^r \cs   P_\beta(8_{17}) \cs   P_\beta(8_{17})^r .\]
We begin by analyzing the 3--fold cover of $S^3$ branched over $P_n(8_{17})$ and assume that $3$ does not divide $n$.  This cover is $M_3(P_n(8_{17}))$ and we denote the branch set in the cover by $\widetilde{B}$.

There is the obvious separating torus $T$ in $S^3 \setminus P_n(8_{17}) $.  Since  3 does not divide $n$,  $T$ has a connected separating lift $\widetilde{T} \subset M_3(  P_n(8_{17}))$.  One sees that  $\widetilde{T}$ splits  $M_3(  P_n(8_{17}))$ into two components:   $X$, the 3--fold cyclic cover of $S^3 \setminus 8_{17}$ and $Y$,  the 3--fold cyclic branched cover of $S^1 \times B^2$, branched over $P_n$.  A simple exercise shows that since $P_n(U)$ is unknotted, $Y$ is the complement of some knot $\widetilde{J_n} \subset S^3$.

A Mayer-Vietoris argument shows that $H_1(M_3(P_n(8_{17}))) \cong \Z_{13} \oplus \Z_{13}$ and the two canonical representations $\rho_3$ and $\rho_9$ that are defined on $X$ extend trivially on $Y$ and so to $M_3(P_n(8_{17}))$.  We denote these extension $\rho_3'$ and $\rho_9'$.   Let $\epsilon'$ be the canonical surjective  homomorphism $\epsilon' \co H_1(M_3(P_n(8_{17}) ))\setminus \widetilde{B}) \to \Z$.  Restricted to $X$ we have $\epsilon'(x) = \epsilon(nx)$, where $\epsilon$ was the canonical representation to $\Z$ defined for the cover of $S^3 \setminus 8_{17}$.

In~\cite[Theorem 3.7]{MR1670420} there is a discussion of   twisted Alexander polynomials of satellite knots in $S^3$, working in the greater generality of homomorphisms to the unitary group $U(m)$.  (A map to $\Z_p$ can be viewed as a representation to $U(1)$).  The proof of that theorem, which relies on the multiplicativity of Reidemeister torsion, applies in the current setting, yielding the following lemma.

\begin{lemma}
\[
\Delta_{P_n(8_{17}), \epsilon', \rho_3'}(t) = \Delta_{ 8_{17}, \epsilon, \rho_3}(t^n)  \Delta_{\widetilde{J_n}}(t).
\]
\end{lemma}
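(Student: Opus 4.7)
The plan is to compute both sides as Reidemeister torsions and invoke the multiplicativity of torsion along the splitting torus $\widetilde{T}$, following the strategy of~\cite[Theorem 3.7]{MR1670420}. Recall that $\Delta_{K,\epsilon,\rho}(t)$ is, up to the ambiguity of units $at^k$, the Reidemeister torsion of the branched cover exterior with coefficients in $\Q(\omega_p)[t,t^{\pm 1}]$ twisted by $\epsilon\otimes \rho$. I would apply this to $M_3(P_n(8_{17}))\setminus \widetilde{B}$ and decompose it along $\widetilde{T}$ as $X' \cup_{\widetilde{T}} Y'$, where $X'$ is the 3--fold cyclic cover of the $8_{17}$ exterior and $Y'$ is the exterior of $\widetilde{J_n}\subset S^3$. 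Milnor's multiplicativity formula then gives
\[
\tau(M_3(P_n(8_{17}))\setminus\widetilde{B}) \doteq \tau(X')\cdot \tau(Y')\cdot \tau(\widetilde{T})^{-1},
\]
where $\doteq$ denotes equality modulo the unit ambiguity of $\Delta$.

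The next step is to identify the restricted characters on the two pieces. On $X'$, by construction of the extensions, $\rho_3'|_{X'}=\rho_3$, and as recorded in the excerpt $\epsilon'|_{X'}=n\epsilon$, since a meridian of $8_{17}$ lifts to a curve linking $\widetilde{B}$ exactly $n$ times (reflecting the winding number of $P_n$). The substitution $t\mapsto t^n$ therefore yields $\tau(X') \doteq \Delta_{8_{17},\epsilon,\rho_3}(t^n)$. On $Y'$, the character $\rho_3'$ is trivial by construction, so the $\Q(\omega_{13})$--twist reduces to the classical untwisted abelian theory and $\tau(Y')\doteq \Delta_{\widetilde{J_n}}(t)$. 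Finally, the combined coefficient system is nontrivial on a meridian of $\widetilde{B}$ lying in $\widetilde{T}$, so $\tau(\widetilde{T})$ is a unit in $\Q(\omega_{13})[t,t^{\pm 1}]$ and can be absorbed into the ambiguity.

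The principal obstacle is bookkeeping rather than any hard estimate: one must track the character data carefully enough to be sure that $\epsilon'|_{X'}$ really picks up the factor of $n$, and that the \emph{trivial} extension $\rho_3'|_{Y'}$ is what reduces the pattern-side torsion to the single polynomial $\Delta_{\widetilde{J_n}}(t)$, rather than to a product of Galois conjugates that would contribute spurious factors. Once these identifications are verified and the torus torsion is confirmed to be a unit, the lemma follows directly from the multiplicativity formula, exactly as in the proof of~\cite[Theorem 3.7]{MR1670420}.
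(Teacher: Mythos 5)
Your proposal is correct and follows essentially the same route as the paper, which simply invokes the satellite formula of~\cite[Theorem 3.7]{MR1670420} and notes that its proof via multiplicativity of Reidemeister torsion carries over to this setting; your decomposition along $\widetilde{T}$, the identifications $\rho_3'|_{X}=\rho_3$ and $\epsilon'|_{X}=n\epsilon$, and the triviality of $\rho_3'$ on the pattern side are exactly the ingredients that argument uses. The only bookkeeping point to watch is that the pattern-side piece is $Y\setminus\widetilde{B}$ (with $t$ tracking linking with $\widetilde{B}$) rather than literally the exterior of $\widetilde{J_n}$, but this is precisely the identification handled in the cited theorem.
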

Similar results hold for the knot  $P_n(8_{17})^r$ and for the character $\rho_9$.

As described in~\cite{MR1670424,MR1670420} Casson-Gordon theory implies that if $L_\alpha\cs L_\beta$ is slice, 
then for some 3--eigenvector or for  some 9--eigenvector the corresponding twisted Alexander polynomial is a norm; that is,  it factors as   $a t^k f(t) \overline{f(t^{-1})}$,  modulo multiples of $(1-t)$.  If it is a 3--eigenvector, the relevant polynomial is of the form 
\begin{equation}\label{eqn:product}
\Delta(t) =
\sigma_a(\Delta_{8_{17}, \epsilon, \rho_3}(t^\alpha))^{x}  \sigma_b(\Delta_{8_{17}, \epsilon, \rho_9}(t^\alpha))^{y} 
\sigma_c(\Delta_{8_{17}, \epsilon, \rho_3}(t^\beta))^{z} \sigma_d(\Delta_{8_{17}, \epsilon, \rho_9}(t^\beta))^{w} \big( \Delta_{\widetilde{J_\alpha}}(t)    \Delta_{\widetilde{J_\beta}}(t)\big)^2,
\end{equation}
where one of $x, y, z,$ or $w$ is equal to 1, and each of  the others are either 1 or  0.  

The four $\Q(\omega_{13})[t]$--polynomials  that appear here, 
\[ \Delta_{8_{17}, \epsilon, \rho_3}(t^\alpha)  , \hskip.1in  \Delta_{8_{17}, \epsilon, \rho_9}(t^\alpha)   , \hskip.1in \Delta_{8_{17}, \epsilon, \rho_3}(t^\beta)   , \hskip.1in \text{and} \hskip.1in \Delta_{8_{17}, \epsilon, \rho_9}(t^\beta),   
\]
and all their Galois conjugates are easily seen to be distinct for any pair $\alpha \neq \beta$.  The following number theoretic result implies that  there is an infinite set of primes  such that if   $\alpha   \in \calp$ and $\beta  \in \calp$,  then  no   product as given in Equation~\ref{eqn:product} can be a norm in $\Q(\omega_{13})[ t]$, proving that the connected sum $L_\alpha \cs L_\beta$ is not slice.  We will present a proof  in Appendix~\ref{app:numtheory}, 

\begin{lemma}\label{lem:factoring} {\it  Let  $f(t) \in \Z(\omega_p)[t]$ be an irreducible  monic polynomial.  If there exists     $\zeta \in \C$ such that $f(\zeta) = 0$ and $\zeta^n \ne 1$ for all $n>0$, then   the set of primes $p$ for which $f(t^p)$ is reducible is finite.}\end{lemma}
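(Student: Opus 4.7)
The plan is to reduce the reducibility of $f(t^{\ell})$ (I write $\ell$ for the varying prime to avoid the notational clash with the fixed prime $p$ appearing in $\omega_p$) to the question of whether $\zeta$ is an $\ell$th power in a fixed number field, and then rule out the latter via the structure of algebraic units.

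Setting $K = \Q(\omega_p)$ and $L = K(\zeta)$, one has a number field of degree $d = \deg f$ over $K$.  The case $\zeta = 0$ is excluded (it would force $f(t) = t$, for which the conclusion fails), so $\zeta \in L^{\times}$.  For a prime $\ell$, I would invoke Capelli's theorem (Vahlen--Capelli): $t^{\ell} - \zeta$ is irreducible over $L$ if and only if $\zeta \notin (L^{\times})^{\ell}$, the exceptional $-4$-biquadratic case being vacuous since $4 \nmid \ell$.  A short degree computation shows that $f(t^{\ell})$ is irreducible over $K$ iff $t^{\ell} - \zeta$ is irreducible over $L$: if $\beta^{\ell} = \zeta$ then $\beta$ is a root of $f(t^{\ell})$, and since $\zeta = \beta^{\ell} \in K(\beta)$ we have $[K(\beta):K] = [L(\beta):L] \cdot d$, which equals $\ell d = \deg f(t^{\ell})$ exactly when $[L(\beta):L] = \ell$.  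Thus it suffices to prove that $\zeta \in (L^{\times})^{\ell}$ for only finitely many primes $\ell$.

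I would then split into two cases based on whether $\zeta$ is a unit in $\calo_L$.  If some prime ideal $\mathfrak{q} \subset \calo_L$ has $v_{\mathfrak{q}}(\zeta) \neq 0$, then a factorization $\zeta = \gamma^{\ell}$ forces $\ell \mid v_{\mathfrak{q}}(\zeta)$, and only the finitely many prime divisors of the fixed nonzero integer $v_{\mathfrak{q}}(\zeta)$ qualify.  Otherwise $\zeta \in \calo_L^{\times}$, and Dirichlet's unit theorem gives $\calo_L^{\times} \cong \mu(L) \times \Z^{r}$; under this isomorphism $\zeta$ maps to $(\eta, \mathbf{a})$ with $\mathbf{a} \in \Z^{r}$, and $\mathbf{a} \neq 0$ precisely because $\zeta$ is not a root of unity.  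A factorization $\zeta = \gamma^{\ell}$ forces $\mathbf{a} = \ell \cdot \mathbf{b}$ for the $\Z^{r}$-component $\mathbf{b}$ of $\gamma$, so $\ell$ must divide every coordinate of $\mathbf{a}$; since $\mathbf{a} \neq 0$ this restricts $\ell$ to finitely many primes.

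I do not anticipate serious obstacles here: Capelli's theorem is classical and the dichotomy is forced by Dirichlet's theorem.  The hypothesis that $\zeta$ is not a root of unity is used precisely to ensure $\mathbf{a} \neq 0$ in the unit case, while the implicit hypothesis $\zeta \neq 0$ puts $\zeta$ in $L^{\times}$.  In the intended application $f$ is one of the twisted Alexander polynomials of $8_{17}$ from Section~\ref{sec:twisted-polys}, shown there to have no roots on the unit circle and hence no roots that are roots of unity, so the hypothesis is automatic.
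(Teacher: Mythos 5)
Your proof is correct and follows essentially the same route as the paper's: reduce via (Vahlen--)Capelli to the statement that $\zeta$ is an $\ell$-th power in $K(\zeta)$ for only finitely many primes $\ell$, then finish with a divisibility argument in the non-unit case and Dirichlet's unit theorem in the unit case. The only differences are cosmetic: the paper proves the prime-exponent case of Capelli by hand (via the constant term of an irreducible factor of $t^{\ell}-\zeta$) rather than citing it, and it detects non-units with the integer norm $N(\zeta)$ rather than with $\mathfrak{q}$-adic valuations; your explicit exclusion of $\zeta=0$, where the statement as literally written fails, is a detail the paper glosses over.
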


\noindent {\bf Proof of Theorem 1.} The last factor in Equation~\ref{eqn:product} involving the $\widetilde{J_n}$ is a  norm, so it can be ignored in determining if the product is a norm.

A numeric computation shows that the twisted polynomials $\Delta_{8_{17}, \epsilon, \rho_i}(t)$ for $i = 3$ and $i=9$   do not have roots on the unit circle, so   Lemma~\ref{lem:factoring} can be applied  with $\F = \Q(\omega_{13})$.   Let $\calp$ be the infinite set of  primes with the property that if  $p \in \calp$ then  $\Delta_{8_{17}, \epsilon, \rho_3}(t^p)$ and $  \Delta_{8_{17}, \epsilon, \rho_9}(t^p)$ are irreducible.    Consider the case of $x=1$ in Equation~\ref{eqn:product}.  Then, assuming that $\alpha \in \calp$ and $\beta\in \calp$,  the term $\sigma_a(\Delta_{8_{17}, \epsilon, \rho_3}) (t^\alpha)$ that appears in Equation~\ref{eqn:product} is relatively prime to the remaining factors and all the factors are irreducible, modulo powers of $t$ and $(1-t)$. Hence, the product cannot be of the form $t^k  (1-t)^j f(t)f(t^{-1})$ for any $f(t) \in \Q(\omega_{13})[t]$.  The cases of $y, z$, or $w  = 1$ are the same.


\appendix


\section{Factoring $ f(t^p)$}\label{app:numtheory}

In this appendix we prove  Lemma~\ref{lem:factoring}, stated in somewhat more generality as Lemma~\ref{lem:factoringmonic} below.  We first summarize some background material.  Further details can be found in any graduate textbook on algebraic number theory.

\begin{itemize}[leftmargin = 1.5em]
\item   $\A \subset \C$ denotes the ring of algebraic integers.  This is the ring consisting of all roots of monic polynomials in $\Z[t]$.

\item  For an extension field $\F / \Q$, the ring of algebraic integers in $\F$ is defined by  $\calo_\F = \F \cap \A$.

\item The property of {\it transitivity} states that if $f(t) \in \calo_{\F}[t]$ is monic and $f(\zeta) = 0$, then $\zeta \in \A$.

\item $\calo_\F^\times $ is defined to be the set of units  in $\calo_\F$.

\item The {\it norm} of an element $x \in \calo_\F$  is defined as $N(x) = \prod x_i \in \Z$ where the $x_i$ are the complex Galois conjugates of $x$.  This map  satisfies $N(xy) = N(x)N(y)$ for all $x, y \in \calo_\F$.  An element $x \in \calo_{\F}$  is in  $\calo_\F^\times $ if and only if $N(x) = \pm 1$.

\item The {\it Dirichlet Unit Theorem} states that for a finite extension $\F / \Q$, the abelian group  $\calo_\F^\times $ is finitely generated, isomorphic to $G  \oplus \Z^{r+s -1}$, where $G$ is finite cyclic, $r$ is the number of embeddings of $\F$ in $\R$, and $2s$ is the number of  non-real embeddings  of $\F$ in $\C$.

\end{itemize}


\begin{lemma}\label{lem:factoringmonic} {\it  Let $\F$ be a finite extension of $\Q$ and   let $f(t) \in \calo_{\F}[t]$ be an irreducible   monic polynomial.  If there exists     $\zeta \in \C$ such that $f(\zeta) = 0$ and $\zeta^n \ne 1$ for all $n>0$, then   the set of primes $p$ for which $f(t^p)$ is reducible is finite.}\end{lemma}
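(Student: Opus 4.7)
The plan is to reduce reducibility of $f(t^p)$ to the question of whether $\zeta$ is a $p$-th power in the ring of integers $\calo_K$ of the field $K = \F(\zeta)$, and then to settle that question via the Dirichlet Unit Theorem and unique factorization of ideals.

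For the reduction, first note that $\zeta \in \calo_K$ by transitivity. Pick any $\beta \in \C$ with $\beta^p = \zeta$; then $\beta$ is a root of $f(t^p)$, and its minimal polynomial over $\F$ divides $f(t^p)$. Since $[\F(\beta):\F] = [\F(\beta):K]\cdot d$ where $d = \deg f$, and $\beta$ satisfies $t^p - \zeta$ over $K$, the monic polynomial $f(t^p)$ of degree $pd$ is irreducible over $\F$ iff $[\F(\beta):K] = p$ iff $t^p - \zeta$ is irreducible over $K$. For prime exponent $p$, the standard Capelli criterion says $t^p - \zeta$ is reducible over $K$ precisely when $\zeta \in K^p$; and any such $p$-th root $\alpha$ lies in $\calo_K$ by integral closure, since it is a root of the monic polynomial $t^p - \zeta \in \calo_K[t]$.

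It therefore suffices to prove that only finitely many primes $p$ admit an equation $\zeta = \alpha^p$ with $\alpha \in \calo_K$. I would split on whether $\zeta$ is a unit. If $\zeta \notin \calo_K^\times$, pick a prime ideal $\mathfrak{p} \subset \calo_K$ occurring in the factorization of $(\zeta)$ with some nonzero finite exponent $e$; the ideal equation $(\zeta) = (\alpha)^p$ forces $p \mid e$, and only finitely many primes do. If instead $\zeta \in \calo_K^\times$, the Dirichlet Unit Theorem writes $\calo_K^\times \cong G \oplus \Z^{r+s-1}$ with $G$ finite cyclic. Express $\zeta$ as $(g,v)$ with $g \in G$ and $v \in \Z^{r+s-1}$. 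The equation $\zeta = \alpha^p$ requires $v \in p\,\Z^{r+s-1}$, and the hypothesis that $\zeta$ is not a root of unity is exactly the statement $v \neq 0$; only finitely many primes then divide every coordinate of $v$.

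The main delicacy is the Capelli reduction — isolating that $t^p - \zeta$ is reducible over $K$ exactly when $\zeta$ is a $p$-th power in $K$, and confirming that the candidate $p$-th root lies in $\calo_K$. Once this reduction is in hand, the dichotomy between the unit and non-unit cases is routine bookkeeping with the structural theorems for $\calo_K$ catalogued in the bullet list preceding the lemma.
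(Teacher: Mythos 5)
Your proposal is correct and follows essentially the same route as the paper's: reduce reducibility of $f(t^p)$ over $\F$ to the condition that $\zeta$ is a $p$-th power in $\calo_{\F(\zeta)}$, then rule out all but finitely many $p$ by splitting on whether $\zeta$ is a unit and invoking the Dirichlet Unit Theorem (with $\zeta$ non-torsion because it is not a root of unity) in the unit case. The only cosmetic differences are that you cite the prime-exponent Capelli criterion where the paper proves it directly by examining the constant term of an irreducible factor of $t^p-\zeta$ over $\F(\zeta)$, and in the non-unit case you use unique factorization of ideals where the paper uses the norm identity $N(\zeta)=N(\eta)^p$.
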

 
\begin{proof}    \smallskip

\noindent{\bf Step 1.} {\it  If $f(\zeta) = 0$, then $\zeta \in   \calo_{F(\zeta)}$.}

This follows immediately from the assumption that $f(t)$ is monic.

\smallskip

\noindent{\bf Step 2.}  {\it  Suppose that $f(t) \in \F [t]$ is irreducible and $f(\zeta) = 0$.  If for some prime  $p$,  $f(t^p)$ is reducible over $\F$,  then $\zeta = \eta^p$ for some $\eta \in \calo_{\F(\zeta)}$.  
}

Let $\xi \in \C$ satisfy $\xi^p = \zeta$.  Since $f(t)$ is irreducible of degree $n$ and $f(t^p) $ is reducible, we have the degrees of extensions satisfying $[\F(\zeta) \co \F] = n   $ and $[\F(\xi) \co \F] <np   $.  It follows from the multiplicity of degrees of extensions that $[\F(\xi) \co \F(\zeta)] < p$.  

The polynomial $t^p - \zeta \in \F(\zeta)[t]$ has $\xi$ as a root.  For all $i$, $\omega_p^i \xi$ is also a root,  so $t^p - \zeta$ factors completely in $\C[t]$ as   \[t^p - \zeta = (t - \xi) (t - \omega_p \xi) \cdots (t - \omega_p^{p-1}\xi).\] 

By the degree calculation just given,   $t^p - \zeta$ has an irreducible factor $g(t)  \in \F(\zeta)[t]$ of degree $l < p$.  We can write 
$g(t) = \prod (t - \omega_p^i \xi)$ where the product is over some proper subset of $\{0, \ldots , p-1\}$.  Multiplying this out, one finds that the constant term is of the form  $\omega_p^j \xi^l \in \F(\zeta)$ for some  $j$ and $l< p$.    
Since $l$ and $p$ are relatively prime, there are integers $u$ and $v$ such that $ul + vp =1$.  Thus, $(\omega_p^j \xi^l)^u ( \xi^p)^v = \omega_p^s \xi$ for some $s$.  In particular, for some $s$, we have $\omega_p^s \xi \in \F(\zeta)$. We let $\eta = \omega_p^s \xi$ and find that $\eta^p =  (\omega_p^s)^p  \xi^p =\zeta$.  Finally, $\eta$ satisfies the monic polynomial $f(t^p)$ and thus is in $\calo_{\F(\zeta)}$.

\smallskip

\noindent{\bf Step 3.}  
{\it   The set of primes $p$ such that $\zeta = \eta^p$ for some $\eta  \in \calo_{\F(\zeta)}$ is finite.}

If  $\zeta = \eta^p$ then $N(\zeta) = N(\eta)^p$.  If $N(\zeta) \ne \pm 1$, then the set of $p$ for which $N(\zeta) = a^p$ for some integer $a$ is finite.

If $N(\zeta) = \pm 1$, then $\zeta \in \calo_{\F(\zeta)}^\times$.   Hence $\zeta$ represents a non-torsion element in a  finitely generated abelian group, and thus it has a finite number of roots.
\end{proof}
\noindent{\bf Comments.}  The argument just given is based on a summary of the proof  for the  case $\F = \Q$ presented in  MathOverflow  by Dimitrove  ~\cite{vdimitrov}.    Step (2) is a special case of the    {\it Vahlen-Capelli Theorem}, proved in the case of $\F = \Q$ by Vahlen and for fields of characteristic 0 by Capelli~\cite{capelli}. A proof for fields of finite characteristic is given in the book by R\'edei~\cite{MR0211820}.


\section{Twisted polynomials of $8_{17}$}\label{app:polys}

Here are the three needed polynomials.  We write $\omega$ for $\omega_{13}$.\vskip.1in

\noindent $
\Delta_{8_{17},\epsilon, \rho_0}(t)  = 1 - t -34t^2 - 101t^3 -34t^4 - t^5 +t^6.
$\vskip.1in

\noindent  $ \Delta_{8_{17},\epsilon, \rho_3}(t)  / (1-t) = $\vskip.05in 

$1\   + $\vskip.05in 

$t ( 2 \omega   + 2\omega^2 + 2\omega^3 + 4\omega^4
+2 \omega^5 +2\omega^6 +\omega^7 +  \omega^8 +2 \omega^9 +4\omega^{10} +  \omega^{11} + 4\omega^{12})\ +  $\vskip.05in 

$  t^2 ( -15 \omega  -10\omega^2  -15 \omega^3 -15 \omega^4
-10 \omega^5 -10 \omega^6 -10 \omega^7 -10   \omega^8  -15 \omega^9 - 15 \omega^{10}   -10 \omega^{11} -15 \omega^{12})\ + $\vskip.05in 

$t^3( 4 \omega   +  \omega^2 + 4\omega^3 + 2\omega^4
+  \omega^5 + \omega^6 +2\omega^7 + 2 \omega^8 +4 \omega^9 +2\omega^{10} + 2 \omega^{11} +2\omega^{12})\ +  $\vskip.05in 

$t^4$
\vskip.1in 

\noindent  $ \Delta_{8_{17},\epsilon, \rho_9}(t)  / (1-t) = $\vskip.05in

$1\   + $\vskip.05in 

$t ( 6 \omega   + 5\omega^2 + 6\omega^3 + 6\omega^4
+5 \omega^5 +5\omega^6 +5\omega^7 + 5 \omega^8 +6 \omega^9 +6\omega^{10} + 5 \omega^{11} + 6\omega^{12})\ +  $\vskip.05in 

$  t^2 ( -13 \omega  -12\omega^2  -13 \omega^3 -13 \omega^4
-12 \omega^5 -12 \omega^6 -12 \omega^7 -12   \omega^8  -13 \omega^9 - 13 \omega^{10}   -12 \omega^{11} -13 \omega^{12})\ + $\vskip.05in 

$t^3( 6 \omega   + 5 \omega^2 + 6\omega^3 +6\omega^4
+  5\omega^5 +5 \omega^6 +5\omega^7 +5 \omega^8 +6 \omega^9 +6\omega^{10} +5 \omega^{11} +6\omega^{12})\ +  $\vskip.05in 

$t^4$

\bibliographystyle{amsplain}	
\bibliography{../../../BibTexComplete}

\end{document}